\documentclass{amsart}
\usepackage{amsmath, amssymb, amsthm, fullpage}
\theoremstyle{plain} 
\newtheorem{theorem}{Theorem}
\newtheorem{lemma}[theorem]{Lemma}
\numberwithin{equation}{section}

\def\RR{\mathbb R}

\setlength\parindent{0pt}

\begin{document}

\title{On the Waring-Goldbach Problem for tenth powers}

\author{Mayank Pandey}
\date{}

\maketitle

\section{Introduction}

Define $H(k)$ to be the least $s$ such that all sufficiently large $n$ satisfying some congruence condition are the sum of $s$ $k^{th}$ powers of primes.

Work done by Vinogradov \cite{Vin1}, Hua \cite{Hua1, Hua2}, and Davenport \cite{Dav}, 
     and later work by Thanigasalam \cite{Tha1, Tha2, Tha3} and Vaughan \cite{Vau1} resulted in the knowledge that 

    $$H(4) \le 15, H(5) \le 23, H(6) \le 33, H(7)\le 47, H(8) \le 63, H(9) \le 83, H(10)\le 107.$$
Recently, Kawada and Wooley \cite{kw} and Kumchev \cite{kum} showed that 
    
    $$H(4)\le 14, H(5)\le 21, H(7)\le 46,$$
and even more recently, Zhao \cite{Zha} has shown that 

    $$H(4) \le 13, H(6) \le 32.$$\\
The purpose of this paper is to establish similar results for tenth powers.\\ 
In particular, we shall obtain the following:

\begin{theorem}\footnote{Since the writing of this paper, the bound $H(10)\le 89$ has been achieved in \cite{kwo}}

$H(10)\le 105$. In particular, every sufficiently large integer congruent to 6 modulo 33 is the sum of 105 tenth powers of primes. 

\end{theorem}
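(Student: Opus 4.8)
The plan is to attack the statement by the Hardy--Littlewood circle method. Write $P = n^{1/10}$ and introduce the weighted exponential sum over primes
\[
f(\alpha) = \sum_{P/2 < p \le P} (\log p)\, e(\alpha p^{10}),
\]
so that, with $s = 105$, the weighted count of representations of $n$ as a sum of $s$ tenth powers of primes is $R(n) = \int_0^1 f(\alpha)^s\, e(-n\alpha)\, d\alpha$. It suffices to show $R(n) \gg n^{s/10 - 1} (\log n)^{-s}$ for all large $n \equiv 6 \pmod{33}$. I would dissect $[0,1)$ via Farey fractions into major arcs $\mathfrak M$, consisting of $\alpha$ near rationals $a/q$ with $q \le Q$ and $|q\alpha - a| \le Q P^{-10}$ for a suitable small power $Q = P^{\delta}$, and minor arcs $\mathfrak m$ forming the complement.

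On the major arcs the standard machinery applies. Using the Siegel--Walfisz theorem to approximate $f(\alpha)$ near $a/q$ by the appropriate Gauss-type sum times $\int e(\beta\gamma^{10})\,d\gamma$, I would obtain
\[
\int_{\mathfrak M} f(\alpha)^s e(-n\alpha)\, d\alpha = \mathfrak S(n)\, J(n) + o\!\left(n^{s/10-1}(\log n)^{-s}\right),
\]
where $J(n) \asymp n^{s/10-1}$ is the singular integral and $\mathfrak S(n)$ the singular series. The crux here is $\mathfrak S(n) \gg 1$, which reduces to local solubility at every prime; the only constraints arise from the primes $p$ with $(p-1)\mid 10$, namely $p \in \{2,3,11\}$. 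Since $105 \equiv 6 \pmod{33}$ and every tenth power of a prime $p \ne 3,11$ is $\equiv 1$ modulo $3$ and modulo $11$, the condition $n \equiv 6 \pmod{33}$ is precisely what guarantees solubility at $3$ and $11$, with solubility at $2$ and all remaining primes automatic for large $n$. Hence $\mathfrak S(n)\gg 1$ and the major arcs deliver the expected main term.

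The heart of the matter is the minor arc bound $\int_{\mathfrak m} |f(\alpha)|^s\, d\alpha = o(n^{s/10-1}(\log n)^{-s})$. I would combine a pointwise Weyl-type estimate with a mean value estimate. After a Vaughan-identity decomposition of $f$ into Type I and Type II bilinear sums, Weyl differencing together with the minor-arc Diophantine approximation yields a power saving $\sup_{\alpha\in\mathfrak m}|f(\alpha)| \ll P^{1-\sigma+\varepsilon}$ for some $\sigma>0$. Separately one needs a mean value of the shape $\int_0^1 |f(\alpha)|^{2t}\, d\alpha \ll P^{2t-10+\varepsilon}$ with $t$ as small as possible; such estimates follow from Hua-type lemmas sharpened by Vinogradov's mean value theorem, transferred from the integer Weyl sum $g(\alpha)=\sum_{x\le P}e(\alpha x^{10})$ to the prime sum at the cost of harmless logarithmic factors. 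Feeding these into
\[
\int_{\mathfrak m} |f|^s \le \Big(\sup_{\mathfrak m}|f|\Big)^{s-2t}\int_0^1 |f|^{2t}
\]
produces the required saving once $s$ exceeds the mean-value threshold $2t$ by enough to let the $\sigma$-saving beat the $\varepsilon$-losses and the logarithms.

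The main obstacle — and the point where the value $s=105$ is pinned down — is the optimization in this final step: forcing the number of variables down to $105$ demands the sharpest available mean value estimate at degree ten together with a carefully tuned choice of $Q$ and, most likely, an intermediate pruning step on a family of arcs of intermediate width to bridge the major-arc scale and the range where the minor-arc Weyl bound becomes effective. I expect the delicate bookkeeping here, balancing the exponent $\sigma$ in the Weyl bound against the exponent in the mean value estimate while controlling the pruning losses, to contain essentially all the difficulty; by comparison the major-arc analysis and the positivity of the singular series are routine.
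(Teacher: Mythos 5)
Your outline (circle method, Siegel--Walfisz major arcs, a sup-times-mean-value minor arc bound, an intermediate pruning step) matches the paper's framework, but the single quantitative input your minor-arc argument rests on does not exist, and this is fatal precisely at $s=105$. With all $105$ variables of the same size $P$, your scheme needs a mean value $\int_0^1 |f(\alpha)|^{2t}\,d\alpha \ll P^{2t-10+\epsilon}$ with $2t\le 104$ (at least one factor must be spared for the sup). For degree $10$ no such estimate is available with so few variables: Hua's lemma requires $2t\ge 2^{10}=1024$, and the Vinogradov-sharpened refinements you invoke required, at the time this paper was written, $2t$ on the order of $2k^2$, i.e. well over $170$ for $k=10$; even the post-2015 decoupling technology does not straightforwardly push the threshold for the single diagonal equation below $k(k+1)=110$, which is still above $104$. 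This is exactly why the pre-existing record was $H(10)\le 107$ and why it was \emph{not} obtained with equal-sized variables. Note also that your final paragraph misplaces the difficulty: in your scheme, once the mean value holds with $2t<s$, \emph{any} power saving $\sigma>0$ in the Weyl bound closes the minor arcs, so there is no delicate balancing of $\sigma$ against the mean value exponent --- the entire problem is the mean-value threshold, which you assume rather than establish.

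The paper gets around this obstruction by abandoning equal-sized variables, and this is the idea your proposal is missing. It takes $5$ prime variables of size $P$ and, for $2\le i\le 51$, two prime variables of size $P_i=P^{\lambda_i}$, where $1=\lambda_1,\dots,\lambda_{51}$ are Thanigasalam's admissible exponents (Davenport's iterative method) with $\lambda_1+\cdots+\lambda_{51}=10\alpha_{51}>10-1/223$. Admissibility yields the $102$-variable mean value $\int_0^1|f_1(\alpha)\cdots f_{51}(\alpha)|^2\,d\alpha\ll P^{10\alpha_{51}+\epsilon}$, a diagonal-dominated count that loses only $P^{1/223}$ against the expected main term --- unattainable with $102$ equal-sized variables. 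On the minor arcs the paper then pulls out three full-size prime sums using Kumchev's estimate $\sup_{\alpha\in\mathfrak{m}}|g_1(\alpha)|\ll P^{1-1/480+\epsilon}$, and the argument closes because $3/480=1/160>1/223$; it is this numerology, together with $105=3+2\cdot 51$, that pins down the number of variables, and it has no counterpart in your write-up. A smaller point: your assertion that local solubility at $p=2$ is ``automatic'' is false as stated, since tenth powers of odd primes are $\equiv 1\pmod 8$, so a sum of $105$ of them forces $n\equiv 1\pmod 8$; the congruence condition should really be taken modulo $264=8\cdot 3\cdot 11$ (the paper's statement modulo $33$ glosses over the same 2-adic condition, but you make the incorrect claim explicitly).
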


The above result will be established with the Hardy-Littlewood method. The results which allow us to improve the previous bound of 
$107$ are the new Weyl sum estimates obtained by Kumchev \cite{kum2} and the mean value estimates in \cite{Tha4}.
\section{Preliminaries}

\subsection{Notation}

In all cases, unless otherwise specified, $p$ will refer to a prime. 

For $x\in\RR$, let $e(x) = e^{2\pi i x}.$

As usual, we will write $O(f)$ to denote some quantity bounded above by $C|f|$ for some $C$, and $f\ll g$ means $|f| \le Cg$ for some $C$, and $\epsilon$ will refer to a sufficiently small positive real number. All sums will be over the natural numbers unless otherwise specified.

We write $$\mathfrak{M}(q, a; Q, P) = \left\{\alpha\in[0, 1] : |q\alpha - a|\le \frac{Q}{P}\right\}.$$
We define the primorial $Q\#$ of $Q > 0$ to be the product of all primes less than or equal to $Q$. 
Also, $c$ will refer to some constant, and will not necessarily be the same each time it is mentioned in the paper.

Call a set $\{\lambda_1, \dots, \lambda_k\}\subset\RR^+$ admissible if the number of solutions $S$ to 
$$\sum_{i\le k} (x_i^k - y_i^k) = 0$$ 
satisfying $P^{\lambda_i} < x_i, y_i\le 2P^{\lambda_i}$ satisfies $S\ll P^{\lambda_i + \dots + \lambda_k + \epsilon}$.
\subsection{Exponential Sum Estimates}  

\begin{lemma}

For all $\alpha$ for which for all coprime $0 \le a \le q \le P^{1/4}$ s.t.  $|q\alpha - a| > P^{1/4}P^{-10}$,

\begin{equation}
\sum_{P < p \le 2P} e(\alpha p^{10}) \ll P^{1 - 1/480 + \epsilon}
\end{equation}
\end{lemma}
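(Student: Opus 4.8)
The plan is to reduce the prime-weighted exponential sum to a collection of bilinear (Type~I and Type~II) sums via a combinatorial identity, and then to bound each of these using the sharpest available Weyl-differencing and mean-value estimates; the precise exponent $1 - 1/480$ will come from the refined tenth-power estimates of Kumchev \cite{kum2} together with the mean value bounds in \cite{Tha4}.

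First I would record what the hypothesis buys us. By Dirichlet's theorem applied with $Q = P^{10}/P^{1/4} = P^{39/4}$, there exist coprime integers $a, q$ with $1 \le q \le P^{39/4}$ and $|q\alpha - a| \le P^{-39/4}$. The minor-arc hypothesis, which forces $|q\alpha - a| > P^{1/4-10}$ for every coprime pair with $q \le P^{1/4}$, rules out such an approximation having small denominator, so this approximation must satisfy $q > P^{1/4}$. Hence throughout the argument we may assume $P^{1/4} < q \le P^{39/4}$, which is exactly the range in which $q$ is large enough to extract genuine cancellation.

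Next, replacing the sum over primes by the von Mangoldt-weighted sum (the difference being absorbed by the $\log$ weight and negligible prime-power terms) and applying Vaughan's identity with parameters $U, V$ chosen as powers of $P$, I would split the sum into Type~I pieces $\sum_{d} c_d \sum_m e(\alpha d^{10} m^{10})$, in which the inner variable $m$ ranges over a long interval, and Type~II pieces $\sum_h \sum_m a_h b_m e(\alpha (hm)^{10})$ with bounded coefficients. For the Type~I sums one applies a Weyl-type estimate to the inner tenth-power polynomial sum in $m$, using the fact that $\alpha d^{10}$ still inherits a usable rational approximation since $d$ is small and $q$ lies in the range above. For the Type~II sums one first applies Cauchy--Schwarz to remove the arithmetic coefficients and then estimates the resulting mean value of $\sum_m e(\beta m^{10})$ over the differenced variable, invoking the mean value estimates of \cite{Tha4} (equivalently, the relevant case of Vinogradov's mean value theorem) rather than naive Weyl differencing. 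Optimizing $U$ and $V$ against the range of $q$ balances the two contributions and yields the stated saving.

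The main obstacle is obtaining the sharp exponent: since $1/480 > 2^{-9} = 1/512$, pure Weyl differencing down to a linear sum is not enough, and one must instead feed the strong mean-value input of \cite{Tha4} (or the corresponding efficient-congruencing/decoupling bound) into the Type~II estimate and track the numerology carefully through the optimization. Verifying that the admissible range $P^{1/4} < q \le P^{39/4}$ is compatible with this optimization across all the bilinear pieces — and in particular that no choice of $q$ degrades the bound below $P^{1-1/480+\epsilon}$ — is the delicate point; this is precisely the content of Kumchev's estimate in \cite{kum2}, which I would ultimately invoke to certify the final numerology.
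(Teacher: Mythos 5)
Your argument ultimately rests on exactly the same input as the paper: Kumchev's Theorem 2 in \cite{kum2} applied to the von Mangoldt-weighted sum, with the $\log$-weight then removed (the paper does this by partial summation, which over the range in question is routine). The intermediate Vaughan-identity/Type I--Type II sketch merely describes the internals of Kumchev's proof and is never actually carried out, but since you invoke \cite{kum2} at the end to certify the numerology anyway, that portion is not load-bearing and the proposal is essentially the paper's proof.
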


\begin{proof}

We have that by Theorem 2 in Kumchev \cite{kum2}, 
$$\sum_{n \le P} \Lambda(n) e(\alpha n^{10}) = \sum_{p \le P} 
\log p e(\alpha p^{10}) + O(\sqrt{P})\ll P^{1 - 1/480 + \epsilon}.$$

Then, by partial summation, we have that 
$$\sum_{p\le P}e(\alpha p^{10}) = \frac{1}{\log P}\sum_{p\le P}\log p e(\alpha p^{10}) - 
\int_2^P \left(-\frac{1}{t\log ^2 t}\right)\sum_{p\le t} \log p e(\alpha p^{10}) dt.$$

Note that 
$$\int_2^P \left(\frac{1}{t\log ^2 t}\right)\sum_{p\le t} \log p e(\alpha p^{10}) dt \le $$
$$\int_2^P\left(\frac{1}{t\log^2 t}\right) \left\lvert \sum_{p\le t} \log p e(\alpha p^{10})\right\rvert dt 
\ll \int_2^P \frac{t^{\epsilon} dt}{t^{1/480}\log^2 t} \ll P^{1 - 1/480 + \epsilon}.$$

The desired result follows.

\end{proof}

\begin{lemma} 
For some $a, q,\alpha$ satisfying $(a, q)  =1$, $|q\alpha - a| \le QP^{-10}$ for some $q \le Q\le P$,

$$\sum_{P < p \le 2P} e(\alpha p^{10}) \ll 
q^{\epsilon} (\log P)^c\left(P\left(q + P^{10}|q\alpha - a|\right)^{-1/2} + P^{11/20}\left(q + P^5|q\alpha - a|\right)^{-1/2})\right).$$

\end{lemma}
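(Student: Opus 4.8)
The plan is to transfer the sum over primes to exponential sums over integers and then apply standard monomial exponential sum estimates, keeping explicit track of the dependence on $q$ and $|q\alpha - a|$. First I would pass from $p$ to the von Mangoldt function, writing $\sum_{P < p \le 2P}\log p\, e(\alpha p^{10}) = \sum_{P < n \le 2P}\Lambda(n) e(\alpha n^{10}) + O(\sqrt P)$ (the prime-power terms being negligible as in Lemma 1) and removing the factor $\log p$ by partial summation at the end. I would then apply Vaughan's identity at some level $z$, decomposing the resulting sum into Type I sums $\sum_{m \le M} a_m \sum_{n} e(\alpha m^{10} n^{10})$, with $a_m$ bounded by a divisor-type function, and Type II sums $\sum_{m}\sum_n a_m b_n e(\alpha m^{10} n^{10})$ with both variables confined to dyadic ranges and $mn \asymp P$.

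For the Type I sums the inner sum over $n$ is a monomial exponential sum of degree $10$ whose leading coefficient $\alpha m^{10}$ has rational approximation $a m^{10}/q$; applying the classical estimate obtained by completion together with van der Corput's method gives square-root cancellation governed by the effective conductor $q + P^{10}|q\alpha - a|$, and summing over $m \le M$ yields, after optimizing $z$, the first term $P(q + P^{10}|q\alpha - a|)^{-1/2}$ up to the admissible $q^\epsilon(\log P)^c$ factors. For the Type II sums I would apply Cauchy--Schwarz to eliminate the coefficients on one variable, followed by a Weyl-differencing step on the other and a further completion; the net effect is a weaker secondary estimate in which the available square-root saving is governed by the smaller effective conductor $q + P^5|q\alpha - a|$, producing the second term $P^{11/20}(q + P^5|q\alpha - a|)^{-1/2}$. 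Collecting the two contributions and undoing the partial summation gives the stated bound.

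I expect the Type II estimate to be the main obstacle. The delicate points are to carry the arithmetic conductor faithfully through the Cauchy--Schwarz and differencing steps, so that the square-root saving is genuinely in $q + P^5|q\alpha - a|$ and not in some smaller quantity, and to choose the dyadic split $M, N$ and the Vaughan level $z$ so that the two error terms balance exactly at the exponent $11/20$. Alternatively, the whole estimate can be read off by specializing to $k = 10$ a general Weyl-sum-over-primes theorem of the type proved in Kumchev \cite{kum2}, in which this two-term shape appears for arbitrary $k$.
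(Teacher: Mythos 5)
The paper does not prove this lemma from scratch at all: its entire proof is the citation ``Lemma 5.6 in Kumchev \cite{kum} with $M = 1/2$, $z = \sqrt{2P}$'' --- the sieve parameter $z=\sqrt{2P}$ turns Kumchev's sum over integers in $(P,2P]$ free of prime factors up to $z$ into the sum over primes, and the choice $M=1/2$ is what produces the exponents $11/20$ and $P^{5}$. Your closing sentence, that the bound can be read off from a general Weyl-sum-over-primes theorem of Kumchev, is therefore essentially the paper's proof; but you point at the wrong paper: \cite{kum2} (short-interval Weyl sums, used here only for Lemma 1), whereas the relevant source is Lemma 5.6 of the seventh-powers paper \cite{kum}.

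Your primary, from-scratch route has a genuine gap, and it sits precisely at the two places where you claim square-root cancellation. For the Type I sums you assert that ``completion together with van der Corput's method'' gives the inner degree-$10$ monomial sum square-root cancellation in the conductor $q + P^{10}|q\alpha - a|$. No such estimate exists: Weyl differencing or van der Corput applied to a degree-$10$ phase saves at most an exponent of order $2^{-9}$, and completion cannot do better than the complete sums themselves, which for prime-power moduli are large --- $S(p^{10},a)\gg p^{9}$ --- so the pointwise saving for $\sum_n e(\beta n^{10})$ near a reduced fraction $a'/q'$ is only $q'^{-1/10}$ (Theorem 4.1 of \cite{Vau2}), not $q'^{-1/2}$. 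Summing that over $m\le M$ gives a bound of quality $Pq^{-1/10+\epsilon}$, which is far weaker than the first term of the lemma. The genuine exponent $-1/2$ for the \emph{prime} sum is not a pointwise phenomenon: it comes from the bilinear (Type II) structure together with averaging of complete-sum bounds over the varying moduli $q/(q,m^{10})$, and this is exactly the content of Kumchev's lemma that your sketch assumes rather than proves. The same problem recurs in your Type II step (a single Weyl differencing plus completion cannot produce the stated conductor saving), and your explanation of $11/20$ is also off: it is not obtained by ``balancing two error terms,'' but is the diagonal contribution $P^{1/2}N^{1/2}$ of a Cauchy--Schwarz over a short Type II variable $N\le P^{1/10}$; one must arrange the decomposition so that bilinear sums with one variable that short (equivalently, the other as long as $P^{9/10}$) are available, and then control the off-diagonal terms --- again degree-$10$ monomial sums, handled by counting/spacing arguments near rationals with varying denominators, none of which appears in the sketch.
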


\begin{proof}
This is just Lemma 5.6 in Kumchev \cite{kum} with $M = 1/2$, $z = \sqrt{2P}$.
\end{proof}

\section{Mean-Value Estimates}

\begin{lemma}

There exist admissible exponents $1 = \lambda_1, \dots, \lambda_{51}$ satisfying 
\begin{equation}
\alpha_{51} = \frac{\lambda_1 + \dots + \lambda_{51}}{10} > 0.999553 > 1 - \frac{1}{2230}.
\end{equation}

\end{lemma}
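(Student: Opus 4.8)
The plan is to construct the exponents one at a time by iterating the fundamental mean-value inequality of Thanigasalam \cite{Tha4}, starting from a trivial base case. First I would record the base case: the singleton $\{\lambda_1\} = \{1\}$ is admissible for degree $k = 10$, since the only solutions of $x_1^{10} = y_1^{10}$ with $P < x_1, y_1 \le 2P$ are the diagonal ones $x_1 = y_1$, of which there are $\ll P = P^{\lambda_1}$. This gives $\alpha_1 = 1/10$, and it is from here that the whole sequence is generated.

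The engine is an iterative step, which I would extract from the mean-value estimates in \cite{Tha4}: if $\{\lambda_1, \dots, \lambda_s\}$ is admissible, then for each permissible value of a differencing parameter $\theta$ (ranging over an interval determined by $\lambda_s$ and $k = 10$) the enlarged set $\{\lambda_1, \dots, \lambda_s, \lambda_{s+1}(\theta)\}$ is again admissible, where $\lambda_{s+1}(\theta)$ is an explicit function of $\theta$, the degree, and the current data. The underlying mechanism is Weyl differencing in one newly introduced variable followed by Hölder's inequality, which reduces the count $S$ for the enlarged system at scale $P$ to the admissible count for $\{\lambda_1, \dots, \lambda_s\}$ at a reduced scale; the $P^{\epsilon}$ losses are absorbed, so that the defining bound $S \ll P^{\lambda_1 + \dots + \lambda_{s+1} + \epsilon}$ is preserved. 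Since the construction only appends exponents, for the fixed target $s = 51$ the implied constants cause no difficulty.

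With the recursion in hand, I would iterate it exactly $50$ times, at each stage choosing $\theta$ so as to maximise the increment $\lambda_{s+1}/10$ added to $\alpha_s$, equivalently to maximise $\alpha_{s+1}$. The resulting exponents form a decreasing sequence $1 = \lambda_1 \ge \lambda_2 \ge \dots \ge \lambda_{51} > 0$, and I expect the deficits $1 - \alpha_s$ to contract by a factor bounded away from $1$ at each stage, so that $\alpha_s \to 1$; the point of stopping at $s = 51$ is that by this stage $1 - \alpha_s$ has fallen below $1/2230$. Concretely I would tabulate the fifty optimal values $\theta_1, \dots, \theta_{50}$ together with $\lambda_2, \dots, \lambda_{51}$, form $\lambda_1 + \dots + \lambda_{51}$, and verify $\alpha_{51} = (\lambda_1 + \dots + \lambda_{51})/10 > 0.999553$.

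The hard part is not conceptual but computational: performing the fifty-fold optimisation of $\theta$ and the attendant bookkeeping with enough precision that the final strict inequality is beyond doubt. To make the verification rigorous rather than merely numerical, I would round each $\lambda_{s+1}$ downward to a rational of controlled denominator and propagate these lower bounds through the recursion, relying on monotonicity of the iterative step in its inputs so that lower bounds for the $\lambda_i$ force a lower bound for $\alpha_{51}$; then $0.999553$ is a guaranteed underestimate. A secondary point to confirm is that the feasible range of $\theta$ stays nonempty throughout and that the hypotheses of the \cite{Tha4} estimate, in particular any constraint linking $\theta$ to the smallest current exponent, are respected at every step.
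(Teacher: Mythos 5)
The paper's own proof of this lemma is a one-line citation: the exponents and the numerical value are taken ready-made from Lemma 18 of Thanigasalam \cite{Tha5} together with (10.5) of \cite{Tha3}. You instead propose to regenerate them from scratch by iterating a differencing step, and this is where your proposal has a genuine gap: the entire content of the lemma is the quantitative claim that \emph{fifty-one} exponents suffice to push the deficit $1-\alpha_s$ below $1/2230$, and your argument for this is only the expectation that ``the deficits contract by a factor bounded away from $1$.'' No contraction factor is ever computed, and whether $s=51$ suffices depends entirely on it. Worse, the engine you describe --- one Weyl differencing in a newly introduced variable followed by H\"older --- is too weak to deliver the stated constant. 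In its standard form such a step contracts the deficit by a factor of roughly $1-1/k = 0.9$ per added exponent; starting from $\alpha_1 = 1/10$ (deficit $0.9$), fifty further steps reach only $1-\alpha_{51}\approx 0.9^{51}\approx 4.6\times 10^{-3}$, an order of magnitude larger than the required $1/2230\approx 4.5\times 10^{-4}$, and one would need roughly $73$ exponents to finish. To get from $0.9$ down to $1/2230$ in fifty steps the \emph{average} contraction must be about $0.86$ per exponent, and that extra saving is precisely what Thanigasalam's refined iterative method (the ``Improvement on Davenport's iterative method'' series, with its multi-parameter optimization and higher mean-value inputs) provides and generic differencing does not.

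So as written the plan cannot reach $0.999553$: the number $51$ is not a free choice you get to ``stop at,'' it is the output of Thanigasalam's specific machinery. To repair the argument you would either have to reproduce his actual iteration in full (at which point you are re-proving \cite[Lemma 18]{Tha5} and the computation behind (10.5) of \cite{Tha3}), or simply cite those results, as the paper does. Two minor points: the iteration you want is not really in \cite{Tha4} --- the paper's introduction mentions \cite{Tha4} loosely, but the proof of the lemma rests on \cite{Tha5} and \cite{Tha3} --- and your rounding/monotonicity scheme for making the numerics rigorous, while sensible, is moot until the iterative step itself is strong enough to make the target attainable.
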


\begin{proof}

This follows from Lemma 18 in Thanigasalam \cite{Tha5} and (10.5) in Thanigasalam \cite{Tha3}.

\end{proof}

For $1\le j\le 51$, write $P_j = P^{\lambda_j}$ where the $\lambda_i$ are as in Lemma 4, and let
$$f_j(\alpha) = \sum_{P_j < n\le 2P_j} e(\alpha n^{10})$$

Then, since the $\lambda_j$ are admissible, we have that the following, which is the main result of this section holds:

\begin{lemma}
We have that 
\begin{equation}
\int_0^1 |f_1(\alpha)\dots f_{51}(\alpha)|^2 d\alpha\ll P^{10\alpha_{51} + \epsilon},
\end{equation}

where $\alpha_{51}$ is the constant mentioned in Lemma 4. 
\end{lemma}

\section{Proof of the main theorem}

Let $N$ be some large integer congruent to $6\pmod{33}$, let $B$ be a sufficiently large real number, and set 
$$P = \frac{1}{2}N^{1/10}, \hspace{1cm} X = P_1^{5}P_2^2\dots P_{51}^2 N^{-1}, \hspace{1cm} L = \log^B P.$$

Let $$\mathfrak{N}(q, a) = \mathfrak{M}(q, a;L, P^{10})\hspace{1cm}
\mathfrak{N} = \underset{(a, q = 1)}{\bigcup_{0\le a\le q \le \log^B P}}\mathfrak{N}(q, a)$$
    $$\mathfrak{M} = \underset{(a, q = 1)}{\bigcup_{0\le a\le q \le P^{1/4}}} \mathfrak{M}(q, a; P^{1/4}, P^{10})$$
and let $\mathfrak{m} = [0, 1)\setminus \mathfrak{M}$, $\mathfrak{n} = [0, 1)\setminus \mathfrak{N}$

For $1 \le i\le 51$, define $$g_i(\alpha) = \sum_{P < p \le 2P} e(\alpha p^{10}).$$ 

For some measurable $\mathfrak{B}\subseteq [0, 1)$, write 
$$R(N; \mathfrak{B}) = \int_0^1 g_1(\alpha)^5 g_2(\alpha)^2\dots g_{51}(\alpha)^2 d\alpha.$$

Let 
$$R(N) = |\{(p_1, \dots, p_{105}) : p_1^{10} + \dots + p_{105}^{10} = N\}|, $$
   for primes $p_1, \dots, p_{105}$ satisfying
   $$P_1 < p_1, p_2, p_3, p_4, p_5\le 2P_1, P_i < p_{2i + 2}, p_{2i + 3}\le 2P_i \text{ for } 2\le i\le 51.$$

Then, by orthogonality, we have that $R(N) =  R(N; [0, 1)).$

Note that in order to prove Theorem 1, it is sufficient to show that for all sufficiently large $N$, $R(N) > 0$. 

\subsection{The major arcs}

In this section, we shall consider the contribution to $R(N)$ from the major arcs $\mathfrak{N}$.

Write $$S(q, a) = \underset{(k, q) = 1}{\sum_{1\le k\le q}}e\left(\frac{ak^{10}}{q}\right), $$
$$v_i(\beta) = \int_{P_i}^{2P_i} \frac{e(\beta t^{10})}{\log t}dt, $$
	$$B(N, q) = \frac{1}{\phi(q)^{105}}\underset{(a, q) = 1}{\sum_{1\le a\le q}} S(q, a) e\left(\frac{-aN}{q}\right), $$
	$$\mathfrak{S}(N) = \sum_{q} B(n, q), $$
	$$J(N; \xi) = \int_{-\xi}^{\xi} v_1(\beta)^5v_2(\beta)^2\dots v_{51}(\beta)^2 e(-N\alpha)d\beta,$$
	and let $J(N) = J(N; \infty).$

	Note that by Theorem 12 in \cite{Hua2}, $\mathfrak{S}(N) \asymp 1$. We also have that $J(N) \asymp X\log^{-105} P.$

	We then have that by partial summation and the Siegel-Walfisz Theorem that for $\alpha\in\mathfrak{N}_0(q, a)$ 
	$$g_i(\alpha) = \phi(q)^{-1}S(q, a)v(\alpha - a/q) + O(P_i L^{-3})$$

	Therefore, since the measure of $\mathfrak{N}$ is $O(L^2 n^{-1})$  
	$$R(N; \mathfrak{N}) = \mathfrak{S}(N)J(N) + O(X L^{-1})\gg X \log^{-105} P$$

\subsection{The minor arcs}

In this section we shall bound the contribution from $\mathfrak{K} = \mathfrak{M}\cap \mathfrak{n}$ and $\mathfrak{m}$

\begin{lemma}
There exists $\eta > 0$ s.t. 
\begin{equation}R(N; \mathfrak{m})\ll XP^{-\eta + \epsilon}\end{equation}.
\end{lemma}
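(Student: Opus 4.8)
The plan is to bound the minor-arc integral $R(N;\mathfrak{m})$, where $\mathfrak{m}=[0,1)\setminus\mathfrak{M}$ is the complement of the major arcs $\mathfrak{M}$ centered at rationals $a/q$ with $q\le P^{1/4}$. The standard strategy is to extract a uniform pointwise bound on one (or a few) of the prime exponential sums $g_i(\alpha)$ valid on $\mathfrak{m}$, and absorb the remaining factors into a mean-value estimate via Cauchy--Schwarz or H\"older. First I would observe that any $\alpha\in\mathfrak{m}$ satisfies precisely the hypothesis of Lemma~2: by Dirichlet's approximation theorem there exist coprime $a,q$ with $q\le P^{1/4}$ and $|q\alpha-a|\le P^{1/4}P^{-10}$, and since $\alpha\notin\mathfrak{M}$ we must have $|q\alpha-a|>P^{1/4}P^{-10}$, which is impossible, so in fact $\alpha$ lies outside every such major arc, forcing $|q\alpha-a|>P^{1/4}P^{-10}$. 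Thus Lemma~2 gives the Weyl-type estimate
\begin{equation}
\sup_{\alpha\in\mathfrak{m}} |g_i(\alpha)| \ll P^{1-1/480+\epsilon}
\end{equation}
for each $i$.

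Next I would set up the decomposition of the integrand. Writing $g_1(\alpha)^5 g_2(\alpha)^2\cdots g_{51}(\alpha)^2$, I would pull out a suitable power of the pointwise bound and control the rest by the mean value in Lemma~5. Concretely, since the integrand carries $5+2\cdot 50 = 105$ prime-power factors while Lemma~5 is stated for the $2$-th moment of the smooth Weyl sums $f_1\cdots f_{51}$, I would first note that $|g_i(\alpha)|\le$ (an $f_i$-type sum over primes), so that $g_i$ is majorized by the corresponding $f_i$ in an $L^2$-sense; more carefully, I expect to invoke that $\int_0^1 |g_1^5 g_2^2\cdots g_{51}^2|\,d\alpha$ can be bounded by pulling out $\sup_{\mathfrak{m}}|g_i|^{\delta}$ on some factors and estimating $\int_0^1 |f_1(\alpha)\cdots f_{51}(\alpha)|^2\,d\alpha\ll P^{10\alpha_{51}+\epsilon}$ on the rest. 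The bookkeeping on exponents must be arranged so that the total power of $P$ extracted is $10\alpha_{51}$ plus the savings from the minor-arc bound, and this should come out to $X P^{-\eta+\epsilon}$ for some $\eta>0$.

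The key arithmetic to check is the exponent accounting. Recall $X = P_1^5 P_2^2\cdots P_{51}^2 N^{-1}$, and $N\asymp P^{10}$, while $10\alpha_{51}=\lambda_1+\cdots+\lambda_{51}$ with $\lambda_1=1$. Writing $g_1$ as the sum over $P<p\le 2P$ (so of ``length $P=P_1$''), the trivial contribution to the integrand is of size $P^{5+2\cdot 50}=P^{105}$; after dividing by $P^{10}$ (from orthogonality against $e(-N\alpha)$) and logarithmic factors, the target size is $X\log^{-c}P\asymp P_1^5\cdots P_{51}^2/P^{10}$. I would therefore use Lemma~5 to save $\sum_{i}(\text{length}_i)$ down to $P^{10\alpha_{51}}$ on the squared factors and pay the pointwise bound $P^{-(1/480)\theta+\epsilon}$ on a remaining block of $g_1$-factors; since $\alpha_{51}>1-1/2230$, the deficit $10(1-\alpha_{51})<10/2230$ in the mean value is more than compensated by the Weyl saving, which is the crux of the whole argument. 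The main obstacle is precisely this exponent bookkeeping: I must verify that $\frac{1}{480}$ of Weyl saving on enough factors outweighs the loss $10(1-\alpha_{51})<\frac{10}{2230}$ incurred by replacing the full $105$-fold product by the $51$-fold mean value, and confirm that the surplus is a genuine positive power $P^{-\eta}$ rather than merely $P^{\epsilon}$. Once a clean inequality of the form $R(N;\mathfrak{m})\ll P^{10\alpha_{51}}\cdot P^{-\eta+\epsilon}$ is in hand, comparing against $X\asymp P^{10\alpha_{51}-10+5\lambda_1+\cdots}$ and simplifying yields the stated bound.
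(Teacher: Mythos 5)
Your outline is the paper's own argument: the same extraction of a power of $g_1$ via the minor-arc Weyl bound, the same majorization of the remaining $L^2$-integral by $\int_0^1|f_1\cdots f_{51}|^2\,d\alpha$ through the underlying Diophantine equation, and the same bookkeeping against $X$. However, two steps need repair or completion. First, your justification that Lemma 2 applies on $\mathfrak{m}$ is incorrect as written: Dirichlet's theorem cannot produce coprime $a,q$ with both $q\le P^{1/4}$ and $|q\alpha-a|\le P^{1/4}P^{-10}$ (to reach that approximation quality Dirichlet only guarantees some $q\le P^{10}P^{-1/4}$), and the ensuing ``which is impossible, so\dots'' reasoning is circular. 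In fact no approximation theorem is needed: since $\mathfrak{M}(q,a;P^{1/4},P^{10})=\{\alpha: |q\alpha-a|\le P^{1/4}P^{-10}\}$, membership in $\mathfrak{m}=[0,1)\setminus\mathfrak{M}$ says verbatim that $|q\alpha-a|>P^{1/4}P^{-10}$ for every coprime pair with $0\le a\le q\le P^{1/4}$, which is exactly the hypothesis of Lemma 2. Moreover, the resulting bound $\ll P^{1-1/480+\epsilon}$ is available (and needed) only for $g_1$, not ``for each $i$'': for $i\ge 2$ the summation range $(P_i,2P_i]$ with $\lambda_i<1$ is mismatched to arcs scaled to $P^{10}$. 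This costs nothing, because the shape of the integrand forces the split $|g_1|^5|g_2|^2\cdots|g_{51}|^2=|g_1|^3\cdot|g_1g_2\cdots g_{51}|^2$, i.e.\ exactly three copies of $g_1$ are pulled out and no others.

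Second, you flag the exponent comparison as ``the crux of the whole argument'' but never carry it out, so your proposal produces no value of $\eta$. It closes as follows. Since prime solutions of $\sum_{i\le 51}(x_i^{10}-y_i^{10})=0$ with $x_i,y_i\in(P_i,2P_i]$ form a subset of the integer solutions counted by Lemma 5, we have $\int_0^1|g_1\cdots g_{51}|^2\,d\alpha\le\int_0^1|f_1\cdots f_{51}|^2\,d\alpha\ll P^{10\alpha_{51}+\epsilon}$, whence
$$R(N;\mathfrak{m})\ll \Bigl(\sup_{\alpha\in\mathfrak{m}}|g_1(\alpha)|\Bigr)^3\int_0^1|f_1(\alpha)\cdots f_{51}(\alpha)|^2\,d\alpha\ll P^{3-1/160+\epsilon}\,P^{10\alpha_{51}}.$$
Since $P_1\cdots P_{51}=P^{10\alpha_{51}}$ and $X=P_1^5P_2^2\cdots P_{51}^2N^{-1}\asymp P^3(P_1\cdots P_{51})^2P^{-10}=P^{20\alpha_{51}-7}$, this gives
$$\frac{R(N;\mathfrak{m})}{X}\ll P^{10-1/160-10\alpha_{51}+\epsilon}\ll P^{1/223-1/160+\epsilon},$$
where the last step uses $10(1-\alpha_{51})<10/2230=1/223$ from Lemma 4. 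The decisive inequality is therefore $3/480=1/160>1/223$: the Weyl saving on three factors beats the mean-value deficit, and the lemma holds with $\eta=1/160-1/223>0$, which is precisely the paper's choice.
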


\begin{proof}
In fact, we shall prove that this is the case for all $\eta = 1/160 - 1/223$.

We have that by (2.1), $$\sup_{\alpha\in\mathfrak{m}} |g_1(\alpha)| \ll P^{1 - 1/480 + \epsilon}.$$

It then follows from (3.1) and (3.2) by considering the underlying diophantine equation 
that $$R(N; \mathfrak{m}) = \int_{\mathfrak{m}} g_1(\alpha)^5g_2(\alpha)^2\dots g_{51}(\alpha)^2 d\alpha$$
$$\ll \left(\sup_{\alpha\in\mathfrak{m}} |g_1(\alpha)|\right)^3\int_0^1 |f_1(\alpha)\dots f_{51}(\alpha)|^2d\alpha
\ll P^{3 - 1/160}(P_1\dots P_{51})^2 P^{10\alpha_{51} + \epsilon} $$ 
$$ \ll P^{1/223 - 1/160 + \epsilon} P^{-10}(P_1\dots P_{51})^2
\ll XP^{-\eta + \epsilon}$$ as desired.

\end{proof}

\begin{lemma}

We have that 
$$R(N; \mathfrak{K}) \ll XL^{-1} \log^c P.$$
\end{lemma}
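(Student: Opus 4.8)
The plan is to prune the remaining major-arc region $\mathfrak{K} = \mathfrak{M}\cap\mathfrak{n}$ down to $\mathfrak{N}$, exploiting the fact that away from the narrow arcs the generating functions are small enough that the mean-value estimate of Lemma 5 is no longer needed in full strength. First I would record the geometry of $\mathfrak{K}$: every $\alpha\in\mathfrak{K}\subseteq\mathfrak{M}$ has a unique approximation $\alpha = a/q+\beta$ with $(a,q)=1$, $q\le P^{1/4}$ and $|\beta|\le P^{1/4}(qP^{10})^{-1}$, while $\alpha\notin\mathfrak{N}$ forces $Q(\alpha) := q+P^{10}|q\alpha-a| > L$. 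Accordingly I would split $\mathfrak{K} = \mathfrak{K}_1\cup\mathfrak{K}_2$, where $\mathfrak{K}_1$ collects the $\alpha$ with $q\le L$ (so that necessarily $P^{10}|q\alpha-a| > L$, i.e.\ $\beta$ lies in the tail $|\beta| > L(qP^{10})^{-1}$), and $\mathfrak{K}_2$ collects those with $L < q\le P^{1/4}$.

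On $\mathfrak{K}_1$ the denominator is small, so the Siegel--Walfisz approximation $g_i(\alpha) = \phi(q)^{-1}S(q,a)v_i(\beta) + O(P_iL^{-3})$ from Section 4.1 is valid for every $i$. Substituting this and summing over $a$ produces, up to admissible error terms, the quantity $\sum_{q\le L}B(N,q)$ times the tail $\int_{|\beta| > L(qP^{10})^{-1}} v_1^5 v_2^2\cdots v_{51}^2\, e(-N\beta)\,d\beta$ of the singular integral. Using the decay $v_i(\beta)\ll P_i(\log P)^{-1}(1+P_i^{10}|\beta|)^{-1}$, the factor $v_1^5$ alone forces this tail to be $\ll X(\log P)^{-105}(L/q)^{-4}$, and since $|B(N,q)|\ll q^{-103/2+\epsilon}$ makes $\sum_q|B(N,q)|q^4$ converge, the total $\mathfrak{K}_1$-contribution is $\ll XL^{-4}(\log P)^c\ll XL^{-1}\log^c P$. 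The error terms are controlled exactly as in the major-arc analysis, using $\mathrm{meas}(\mathfrak{K}_1)\ll L^2P^{-10}$.

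On $\mathfrak{K}_2$ the approximation is unavailable, so I would instead bound every $g_i$ by the analogue of Lemma 3 for the range $(P_i,2P_i]$ (again Lemma 5.6 of Kumchev), whose leading term gives $g_i(\alpha)\ll P_i(\log P)^c(q+P_i^{10}q|\beta|)^{-1/2}$; this is legitimate precisely for those $i$ with $\lambda_i\ge 1/4$, so that $q\le P^{1/4}\le P_i$, while the finitely many remaining factors are bounded trivially by $P_i$. Writing $w_1 = 5$, $w_i = 2$ and $s = \tfrac12\sum_{\lambda_i\ge 1/4} w_i$, the integral $\int_{\mathfrak{K}_2}|g_1|^5|g_2|^2\cdots|g_{51}|^2$ is then at most $(\log P)^c\prod_i P_i^{w_i}\sum_{L < q\le P^{1/4}} q^{1-s}\int_{\mathbb R}(1+P^{10}|\beta|)^{-s}\,d\beta\ll X(\log P)^cL^{2-s}$, upon using $\prod_i P_i^{w_i}\asymp XP^{10}$ and $\int(1+P^{10}|\beta|)^{-s}\,d\beta\ll P^{-10}$. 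Since $w_1 = 5$ together with even one further exponent $\lambda_i\ge 1/4$ gives $s\ge 7/2$ and hence $2-s\le -3/2$, this yields $\mathfrak{K}_2\ll XL^{-3/2}(\log P)^c$, again within the required bound.

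The main obstacle is the regime $\mathfrak{K}_2$ with $L < q\le P^{1/4}$: there the asymptotic evaluation of the $g_i$ breaks down, so one must rely solely on the upper bounds of Lemma 3, and the entire saving has to come from summing $q^{-s}$ over $q > L$. This forces one to verify that enough of the admissible exponents $\lambda_i$ satisfy $\lambda_i\ge 1/4$ — equivalently that at least two of the $g_i$ (beyond the $g_1^5$ already present) retain the full $q^{-1/2}$ decay — so that the $q$-sum decays at least like $L^{-1}$; checking this against the explicit exponents furnished by Lemma 4 is the one place where the particular shape of the Thanigasalam sequence, rather than merely the bound $\alpha_{51} > 1-1/2230$, is used. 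By contrast the region $\mathfrak{K}_1$ is routine, being a standard tail estimate for the convergent singular series and singular integral.
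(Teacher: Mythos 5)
Your treatment of $\mathfrak{K}_2$ (the arcs with $L < q \le P^{1/4}$) is essentially the paper's own argument: the paper — citing ``Lemma 4'', evidently a typo for Lemma 3 — bounds $g_1$ and the few $g_i$ with $\lambda_i \ge 1/4$ by Kumchev's pointwise estimate, bounds the remaining factors trivially, and wins the power of $L$ from $\sum_{q > L} q^{1-s}$. The genuine gap is in your treatment of $\mathfrak{K}_1$. The Siegel--Walfisz approximation $g_i(\alpha) = \phi(q)^{-1}S(q,a)v_i(\beta) + O(P_iL^{-3})$ is \emph{not} valid on $\mathfrak{K}_1$: the error term in that approximation is in reality $O\bigl((1+P^{10}|\beta|)\,P\exp(-c\sqrt{\log P})\bigr)$, which is $O(PL^{-3})$ only when $P^{10}|\beta|$ is at most a power of $\log P$ — that is, precisely on the narrow arcs $\mathfrak{N}(q,a)$, which is exactly why $\mathfrak{N}$ is cut off at $|q\alpha - a| \le LP^{-10}$. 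On $\mathfrak{K}_1$ the quantity $P^{10}|\beta|$ ranges up to $P^{1/4}/q$, where this error term exceeds the trivial bound $P$, so the asymptotic formula carries no information on almost all of $\mathfrak{K}_1$. Indeed, if the formula did hold on all of $\mathfrak{M}(q,a)$ for $q \le L$, there would be no need to introduce the intermediate region $\mathfrak{K}$ at all; the existence of this region in the paper's arc decomposition is a symptom of precisely this failure. Relatedly, off the narrow arcs the true sum $g_1$ is only known to enjoy square-root decay $(q + P^{10}|q\alpha - a|)^{-1/2}$, not the linear decay $(1+P^{10}|\beta|)^{-1}$ of $v_1$ on which your tail computation relies.

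Two further slips compound this. Your error analysis invokes $\mathrm{meas}(\mathfrak{K}_1) \ll L^2P^{-10}$, but that is the measure of $\mathfrak{N}$; in fact $\mathrm{meas}(\mathfrak{K}_1) \asymp LP^{1/4}P^{-10}$, so even granting the (invalid) approximation, a cross term with one factor $O(P_iL^{-3})$ and trivial bounds on the rest contributes roughly $XNL^{-3}\cdot LP^{1/4}P^{-10} \asymp XP^{1/4}L^{-2}$, which is larger than the target by a power of $P$. Also, in your $\mathfrak{K}_2$ bound the factors $g_i$ with $i \ge 2$ only supply decay in $P_i^{10}|\beta|$, not $P^{10}|\beta|$; this is harmless if you take the $\beta$-decay from $g_1^5$ alone and only the $q^{-1/2}$ savings from the other admissible factors, but as written the substitution goes in the wrong direction. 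The correct repair of $\mathfrak{K}_1$ is the paper's: apply Lemma 3 there as well, getting $|g_1(\alpha)| \ll Pq^{\epsilon}\log^c P\,(q + P^{10}|q\alpha - a|)^{-1/2}$ on $\mathfrak{M}(q,a)\setminus\mathfrak{N}(q,a)$; raising to the fifth power, taking the extra $q^{-1/2}$ factors from the $g_i$ with $\lambda_i \ge 1/4$ (needed to reach $L^{-1}$ rather than $L^{-1/2}$), integrating over the tail $|\beta| > L(qP^{10})^{-1}$, and summing over $a$ and $q \le L$ yields $\ll XL^{-1}\log^c P$, in parallel with your $\mathfrak{K}_2$ computation.
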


\begin{proof}
Note that $\mathfrak{K}$ is the disjoint union of $\mathfrak{K}(q, a)$ for coprime $a, q$ satisfying $0 \le  a\le q \le P^{1/4}$, 
where $\mathfrak{K}(q, a) = \mathfrak{M}(q, a)\setminus \mathfrak{N}(q, a)$ for $q\le L$ and 
$\mathfrak{K}(q, a) = \mathfrak{M}(q, a)$ otherwise.

Then, it follows by applying Lemma 4 that 
$$\int_{\mathfrak{K}} g_1(\alpha)^5 g_2(\alpha)^2\dots g_{51}(\alpha)^2d\alpha$$
$$\ll \int_{\mathfrak{K}} |g_1(\alpha)|^5|g_2(\alpha|^2|g_3(\alpha)||g_3(\alpha)||g_4(\alpha)\dots g_{51}(\alpha)|^2d\alpha$$
$$\ll Xn\sum_{q\le P^{1/4}}\underset{(a, q) = 1}{\sum_{1\le a\le q}}\int_{\mathfrak{K}(q, a)}
\frac{\log^c P d\alpha}{q^4(1 + n|\alpha - a/q|)^2}.$$
The desired result follows. 
\end{proof}

Now, it follows from this and (4.1), by making $B$ sufficiently large, that 
$$R(N) = R(N; \mathfrak{N}) + R(N; \mathfrak{K}) + R(N; \mathfrak{m})\gg X\log^{-105}P, $$
so Theorem 1 holds. 

\section{Acknowledgements}
The author is thankful to T. D. Wooley and  D. Goldston for providing corrections and suggestions.

\end{document}